\newtheorem{thm}{Theorem}[section]
\newtheorem{prop}[thm]{Proposition}
\newtheorem{lem}[thm]{Lemma}
\newtheorem{rem}[thm]{Remark}}
\newcommand{\ra}{\rightarrow}
\newcommand{\dis}{\displaystyle}
\def\R{\mathbb R}
\def\Z{\mathbb Z}
\def\N{\mathbb N}
\def\d{\text{\rm{d}}}
\def\E{\mathbb E}
\def\p{\mathbb P}
\def\q{\mathbb Q}
\def\veps{\varepsilon}
\def\cps{C_x([0,\infty))}
\newcommand{\fin}{\hspace*{\fill}\rule{0.3em}{1ex}}
\newenvironment{proof}{{\bf \noindent Proof.}}{\fin}
\numberwithin{equation}{section}
\begin{document}

\title{Boundary crossing probabilities for diffusions with piecewise linear drifts}
\author{Jinghai Shao${}^{a}$ and Liqun Wang${}^b$\thanks{Corresponding author: Department of Statistics, University of Manitoba, Winnipeg, Manitoba, Canada R3T 2N2. Email: liqun.wang@umanitoba.ca. Tel.: 204-474-6270, Fax: 204-474-7621.}\\[0.6 cm]
{\small a:
School of Mathematical Sciences, Beijing Normal University, 100875 Beijing, China}\\
{\small b: Department of Statistics, University of Manitoba, Winnipeg, Manitoba, Canada R3T 2N2}\\
{\small School of Science, Beijing Jiaotong University, China}}
\maketitle

\begin{abstract}
We propose an approach to approximate the boundary crossing probabilities for general one-dimensional diffusion processes, and derive the convergence rate for this approximation scheme. There results are based on the explicit expression of the Laplace transforms of the first passage densities for diffusions with piecewise linear drifts.
%
The proposed method is applied to a reliability problem where the standard degradation model based on Wiener process is extended to diffusion processes with piecewise linear drifts.
\end{abstract}

\noindent\textbf{Keywords:} Boundary crossing probability; Diffusion processes; First passage density; First passage time; Piecewise linear drifts.

\paragraph{AMS 2010 Subject Classification:}
Primary 60J65, 60J75, Secondary 60J60, 60J70

\section{Introduction}

Boundary crossing probabilities for stochastic processes play an important role in many research areas. For example, in business and industry
a company's financial status can be represented by its total asset and debt at any given time, which can be regarded as two stochastic
processes. Then the company goes bankrupt when its asset process falls below its debt process for the first time. Therefore the company's
default risk can be described by the probability that the difference of the asset and debt processes reaches a certain threshold. In finance,
the arbitrage-free price of a barrier option is given by the expectation of the payoff function of strike price and the first hitting time of
the underlying asset price to the barrier. In neuroscience, a popular integrate-and-fire model assumes that a neuron fires a spike when the
membrane potential reaches a threshold. In engineering, the system reliability can be represented by the probability that a damage process
crossing a threshold. In epidemiology, if the spread of an infectious disease is modelled by a stochastic process, then it is of interest and
importance to know the probability that this process reaches a certain threshold within a certain time limit. The first passage time also
arises in many other disciplines such as biology, chemistry, ecology, economics, environmental science, genetics, physics as well as
statistics. Some references for applications can be found in \cite{WP07}. More recent literature includes \cite{TF}, \cite{TM} and \cite{MTKP}.

Despite their importance and wide applications, the calculation of boundary crossing probabilities is a difficult task. It is well-known that explicit formulas exist only for a few special processes and boundaries. For more general processes or boundaries, one has to rely on certain numerical approximation schemes. Wang and P\"otzelberger (1997) \cite{WP97} derived an explicit formula for the probability of Brownian motion crossing a piecewise linear boundary, and used this formula to approximate the crossing probabilities of Brownian motion to general nonlinear boundaries. The numerical computation can be done by Monte Carlo integration and the accuracy of the approximation can be computed automatically. This approach was extended to two-sided boundary crossing problems by Novikov et al (1999) \cite{NFK}, P\"otzelberger and Wang (2001) \cite{PW} and Borovkov and Novikov (2005) \cite{BN}.

Although the boundary crossing problem has been intensively studied for decades, most works concentrate on Brownian motion or some special processes such as Ornstein-Uhlenbeck processes.
The calculation of the boundary crossing probabilities for general diffusion processes remains a challenging and interesting problem. Wang and P\"otzelberger (2007) \cite{WP07} proposed a transformation approach to compute the boundary crossing probabilities for a class of processes which can be expressed as piecewise monotone functionals of standard Brownian motion. However, from both theoretical and practical points of view this class of processes is very limited. Moreover, it is difficult to generalize this transformation method to deal with other diffusion processes that are not functionals of Brownian motion.

In this paper, we propose a novel approach to the boundary crossing problem for general diffusion processes. Specifically, we first derive an explicit formula for the Laplace transform of boundary crossing density for diffusion processes with piecewise linear drift. Then we show that a general diffusion process can be approximated by a sequence of diffusion processes with piecewise linear drifts, so that the corresponding crossing probabilities can be obtained. The proposed method is fairly general so that the obtained results cover a large class of diffusion processes.

To simplify notation, in this paper we consider the diffusion processes satisfying the stochastic differential equation (SDE)
\begin{equation}\label{1.1}
\d X_t=\mu(X_t)\d t+\d W_t,\quad X_0=x.
\end{equation}
There is no loss of generality in the sense that any diffusion process satisfying
\begin{equation}\label{1.2}
\d X_t=\mu(X_t)\d t+\sigma(X_t)\d W_t
\end{equation}
with differentiable and non-zero $\sigma(x)$ can be transformed into one with unit diffusion coefficient through the transformation
$$F(y)=\int_{y_0}^y\frac1{\sigma(u)}\d u$$
for some $y_0$. Indeed, it is easy to verify by Ito's formula that the process $(Y_t)_{t\geq 0}$ with $Y_t:=F(X_t)$ $(t\geq 0)$ satisfies SDE (\ref{1.1}) with drift coefficient
$\dis \mu(x)=(\mu/\sigma-\sigma'/2)\circ F^{-1}(x)$ (see e.g., \cite{DB}).

For any process $(X_t)_{t\geq 0}$ satisfying (\ref{1.1}) we consider the first crossing time over a constant boundary $c\in\R$:
\begin{equation}\label{1.4}
\tau_c=\inf\{t>0;\ X_t\geq c\}.
\end{equation}

In Section 2,  we first study the existence of the first passage densities for diffusion processes, then apply this result to diffusion processes with piecewise linear drifts, and obtain the explicit formula of its corresponding Laplace transform. In Section 3, we propose a method to approximate the boundary crossing probabilities for a general diffusion process through its approximation by a sequence of diffusion processes with piecewise linear drifts. The approximation rate is also derived. In Section 4, the proposed method is applied to an engineering reliability problem to establish a new degradation model, which generalizes the known degradation models based on Wiener process. Finally, all theoretical proofs are contained in the Appendix.

\section{The processes with piecewise linear drift}

In this section, we first establish a general result on the existence of the first passage density. Let $(X_t)_{t\geq 0}$ be a process satisfying SDE (\ref{1.1}). Assume that $\mu$ satisfies the following conditions:
\begin{itemize}
  \item[$\mathrm{(H1)}$] there exists a constant $K_0$ such that $|\mu(x)|\leq K_0(1+|x|)$, for all $x\in\R$;
  \item[$\mathrm{(H2)}$] there exists a constant $K_1$ such that
 \[|\mu(x)-\mu(y)|\leq K_1|x-y|,\quad \forall\ x,\,y\in\R.\]
\end{itemize}
Then it is well known that SDE (\ref{1.1}) admits a unique nonexplosive solution  (see, e.g., \cite{IW}).

Let $C_x([0,\infty))$ be the space of continuous paths starting at $x$, i.e.
$$C_x([0,\infty))=\big\{\omega:[0,\infty)\ra \R\ \text{continuous};\, \omega_0=x\big\}.$$
Let $\q_x$ be the Wiener measure on $C_x([0,\infty))$, then the process $W_t(\omega):=\omega_t$, $t\geq 0$ is a Brownian motion under $\q_x$.  Define $\mathcal F_t=\sigma(W_s; 0\leq s\leq t)=\sigma(\omega_s;\ 0\leq s\leq t, \omega\in \cps)$.
The following theorem establishes the existence of the first passage density for $(X_t)_{t\geq 0}$, the proof of which is given in the Appendix by using the Besicovitch derivation theorem.

\begin{thm}[Existence of density]\label{t2.3}
Let $(X_t)_{t\geq 0}$ be defined by (\ref{1.1}) with $\mu(\cdot)$ satisfying (H1), (H2) and   $M:=\inf_{y\in\R}\big\{\mu(y)^2+\frac13 \mu_{-}'(y)\big\}>-\infty $, where
$\mu_{-}'(y)=\liminf_{z\ra y}\frac{\mu(z)-\mu(y)}{z-y}$. Then the distribution of the first crossing time $\tau_c$ defined in (\ref{1.4}) is absolutely continuous with respect to Lebesgue measure on $[0,\infty)$ and hence its density $f_c(t,x)$ exists almost everywhere on $[0,\infty)$ for each $x<c$. Moreover, the density $f_c(t,x)$ satisfies
\begin{equation}\label{2.7}
f_c(t,x)\leq \frac{c-x}{\sqrt{2\pi} t^{3/2}}e^{G(c)-G(x)-\frac{3Mt}{2}} e^{-\frac{(c-x)^2}{2t}},
\end{equation}
where $G(y)=\int_{y_0}^y\mu(z)\d z$ for some fixed $y_0$.
\end{thm}

Now we consider the process $(X_t)_{t\geq 0}$ satisfying (\ref{1.1}) with continuous and piecewise
linear drift $\mu(x)$ in the form
\begin{equation}\label{pw linear}
\mu(x)=\!\sum_{i=-\infty}^{\infty} \mathbf{1}_{[x_{i-1},x_i]}(x)(a_i x+b_i),
\end{equation}
where $\sup\{|a_i|\!+\!|b_i|;\ i=0,\pm 1,\pm 2,\cdots\}<\infty$ and $ \{\ldots<x_{-m}<\ldots<x_{-1}<x_0<x_1<\ldots<x_m<\ldots\}$ is a partition of $\R$.
According to Theorem \ref{t2.3}, it is clear that the  density $f_c(t,x)$ of $\tau_c$ exists almost everywhere for $(X_t)_{t\geq 0}$ with piecewise linear drift.
Next, we shall use the method of Darling and Siegert \cite{DS} to obtain the concrete Laplace tranform of the distribution of $\tau_c$. Let
\begin{gather*}
\hat f_c(x|\lambda)=\int_0^\infty e^{-\lambda t} f_c(t,x)\d t.
\end{gather*}
It is shown in \cite{DS} that if $f_c$ exists, then it is given by
\begin{equation}\label{2.10}
 \hat f_c(x|\lambda)=\frac{u(x)}{u(c)},\quad x<c,
\end{equation}
where the function $u$ is any solution of the ordinary differential equation
 \begin{equation}\label{2.11}
 \frac 12 \frac{\d^2 y}{\d x^2}+\mu(x)\frac{\d y}{\d x} -\lambda y=0.
 \end{equation}
In general, it is difficult to find the explicit solutions of the above differential equation. However, we show that it is possible to find such explicit solutions when $\mu(\cdot) $ has the form of (\ref{pw linear}). Thus, we obtain the following main theorem.

\begin{thm}\label{main}
Let $(X_t)_{t\geq 0}$ be the solution of SDE (\ref{1.1}) with $\mu$ satisfying (\ref{pw linear}). Then for $x<c$, the Laplace transform of the first passage density $f_c(t,x)$ is given by
\begin{equation}\label{2.12}
\hat f_c(x|\lambda)= \frac{u(x)}{u(c)},
\end{equation}
with
\begin{equation}\label{2.13}
u(x)=\sum_{i=-\infty}^{\infty} \Big[J_i\big(-\frac{\lambda}{2 a_i}, \frac 12; -a_i \big(x+\frac{b_i}{a_i}\big)^2\big)\mathbf 1_{a_i\neq
0}+\tilde J_i(b_i;x)\mathbf 1_{a_i=0}\Big]\mathbf 1_{[x_{i-1},x_i]}(x),
\end{equation}
where
\begin{align*}
J_i(a,\frac 12;x)&=C_{1,i} \Psi(a,\frac 12;x)+C_{2,i} x^{1/2}\Psi(a+\frac 12,\frac 32;x),\\
\Psi(a, \frac 12;x)&= 1+\sum_{k=1}^\infty \frac{(a)_k}{(\frac 12)_k} \frac{x^k}{k!},\quad (a)_k:= a(a+1)\ldots (a+k-1),\\
\tilde J(b_i;x)&=
\begin{cases} e^{-b_i x}\big[C_{1,i} e^{\Delta_i x/2}+C_{2,i} e^{-\Delta_i x/2}\big], &\text{if}\
\Delta_i^2=4b_i^2+8\lambda>0,\\
e^{-b_i x}\big[C_{1,i} \sin(\Delta_i x/2)+C_{2,i} \cos(\Delta_i x/2)\big], &\text{if}\
\Delta_i^2=-4b_i^2-8\lambda>0,\\
e^{-b_ix}(C_{1,i} x+C_{2,i}), &\text{if}\ \Delta_i^2=4b_i^2+8\lambda=0.
\end{cases}
\end{align*}
Here the constants $C_{1,i}, \,C_{2,i}$ are chosen such that $u$ is differentiable and they can be determined recursively as: given $C_{1,i}, C_{2,i}$ for all $i\leq k$, $C_{1,k+1},\, C_{2,k+1}$ are chosen such that $\lim_{x\uparrow x_k}u(x)=\lim_{x\downarrow x_k} u(x)$ and $\lim_{x\uparrow x_k}\frac{u(x)-u(x_k)}{x-x_k}=\lim_{x\downarrow x_k} \frac{u(x)-u(x_k)}{x-x_k}$. Furthermore, $u$ is uniquely determined up to a multiplicative constant, which will not change the value of $\hat f(x|\lambda)$.
\end{thm}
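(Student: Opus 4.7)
The plan is to invoke Proposition \ref{t2.4}, which reduces the problem to explicitly constructing a solution $u$ of the ODE (\ref{2.11}) with $u(-\infty)=0$. I would do this in three stages: first solve the ODE on each interval $[x_{i-1},x_i]$ in closed form, then glue the local solutions across the breakpoints $x_k$ by $C^1$-matching, and finally fix the initial constants on the leftmost tail so that the decay condition is satisfied.

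On an interval where $a_i\neq 0$, I would first translate $y=x+b_i/a_i$ to remove the constant part of the drift, reducing the ODE to $\frac{1}{2}u''(y)+a_i y\,u'(y)-\lambda u(y)=0$. The substitution $z=-a_i y^2$ then converts this (by direct computation of $u_y$ and $u_{yy}$ in terms of $u_z$, $u_{zz}$, using $a_i^2 y^2=-a_i z$) into Kummer's confluent hypergeometric equation
$$z\,u''(z)+\Bigl(\frac{1}{2}-z\Bigr)u'(z)+\frac{\lambda}{2a_i}u(z)=0,$$
whose two linearly independent solutions are the Kummer series $\Psi(-\lambda/(2a_i),1/2;z)$ and $z^{1/2}\Psi(-\lambda/(2a_i)+1/2,3/2;z)$; written back in $x$, this is exactly the $J_i$ branch of formula (\ref{2.13}). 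On an interval where $a_i=0$, the ODE has constant coefficients, characteristic equation $r^2+2b_i r-2\lambda=0$, and roots $-b_i\pm\Delta_i/2$; the three cases according to the sign of the discriminant reproduce the three branches of $\tilde J_i$.

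For the gluing, I would exploit the continuity of $\mu$: from the ODE itself, $u''=2(\lambda u-\mu u')$, so once $u$ and $u'$ are continuous at $x_k$ the second derivative is automatically continuous there as well. It therefore suffices to impose the two conditions $u(x_k^-)=u(x_k^+)$ and $u'(x_k^-)=u'(x_k^+)$. Because the two local basis functions on $[x_k,x_{k+1}]$ are linearly independent, these conditions form a non-singular $2\times 2$ linear system that uniquely determines $(C_{1,k+1},C_{2,k+1})$ in terms of $(C_{1,k},C_{2,k})$, and recursion produces a global $C^1$-solution of the ODE on $\R$.

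The final step is to choose the initial constants on the leftmost tail so that $u(-\infty)=0$. Because $\sup_i(|a_i|+|b_i|)<\infty$, the coefficients of the ODE are bounded, and using either the large-$z$ asymptotics of Kummer's $M$-function (in the $a_i\neq 0$ case) or the sign of the characteristic roots (in the $a_i=0$ case) one sees that exactly one direction in the local solution space decays at $-\infty$; taking this direction as the seed of the matching recursion yields a solution $u$ that is unique up to the overall multiplicative constant, which cancels in the ratio $u(x)/u(c)$. I expect the main obstacle to be precisely this last step: one must verify that the decaying subspace at $-\infty$ is truly one-dimensional and that the matching propagation preserves this decay through the (possibly infinite) chain of breakpoints on the left tail, which boils down to a careful asymptotic comparison of the two Kummer solutions combined with the uniform boundedness of the $(a_i,b_i)$.
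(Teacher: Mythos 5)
Your proposal follows essentially the same route as the paper: invoke Proposition \ref{t2.4} and exhibit a piecewise closed-form solution of the ODE (\ref{2.11}) --- reduced to Kummer's confluent hypergeometric equation on intervals with $a_i\neq 0$ and to a constant-coefficient characteristic equation when $a_i=0$ --- then glue the local solutions by $C^1$-matching and impose $u(-\infty)=0$. The paper merely cites the Polyanin--Zaitsev handbook for these closed forms and does not elaborate on the matching recursion or the decay at $-\infty$, so your explicit substitution deriving Kummer's equation and your attention to the one-dimensionality of the decaying subspace are refinements of, not departures from, the published argument.
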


\begin{rem}
It is worthwhile to include some asymptotic properties of $\Psi(a,b;x)$ here:
\begin{align*}\Psi(a,b;x)&=\frac{\Gamma(b)}{\Gamma(a)}e^xx^{a-b}\Big[1+O\Big(\frac1{|x|}\Big)\Big] \quad \text{if}\ x\ra +\infty,\\
\Psi(a,b;x)&=\frac{\Gamma(b)}{\Gamma(b-a)}(-x)^{-a}\Big[1+O\Big(\frac1{|x|}\Big)\Big]\quad \text{if}\ x\ra -\infty,
\end{align*}
where $\Gamma(z)=\int_0^\infty t^{z-1}e^{-t}\d t$ is the gamma function.
\end{rem}

\begin{rem}
Our approach in this section can be used to deal with the first passage time of two-sided constant boundaries. First, similar to the treatment of Theorem \ref{t2.3}, we can prove that the first passage density for two-sided constant boundaries also exists. Then using \cite[Theorem 3.2]{DS} we can obtain the explicit expression of the Laplace transform of the corresponding first passage time density. Inevitably, the expressions in this case will be more complicated.
\end{rem}

\begin{rem}
Given the Laplace transform $\hat f_c(x|\lambda)$ in (\ref{2.12}), the first passage density can be obtained by Laplace inversion. Thanks to the recent development in numerical techniques and the availability of high-speed computers, efficient numerical algorithms are available for numerical inversion of Laplace transforms (see, e.g., \cite{Kou2003}). Traditionally, the
Laplace transform has been a power tool in applied mathematics and well-studied in the literature. Many methods, such as complex analysis, residue
computations and Fourier integral inversion theorem have been developed to calculate the inversion of the Laplace transform. For results using elementary analysis, see, e.g., \cite{Br}, \cite{Hsu} and \cite{Po}.
\end{rem}

\section{Approximation for general diffusion processes}

In this section we consider the boundary crossing problem for general diffusion processes. The basic idea of our approach is that any diffusion process with sufficiently smooth drift function can be approximated by a diffusion with piecewise linear drift, so that its boundary crossing probabilities can be approximated by the corresponding probabilities. The key issue in this approach is to estimate the accuracy of this approximation.

To this end we first establish a general rule to control the approximation accuracy of the boundary crossing probabilities based on the approximation of drift functions. Let $(X_t)_{t\geq 0}$ be a diffusion process satisfying
\begin{equation}\label{nhp}
\d X_t=\mu(X_t)\d t+\d W_t,\quad X_0=x,
\end{equation}
where $\mu(\cdot)$ is lower bounded and satisfies (H1), (H2). Let $\mu_{l}=\inf_{y\in \R} \mu(y) > -\infty$.
For each $\veps>0$, let $(X_t^\veps)_{t\geq 0}$ be  a diffusion process  satisfying the SDE
\begin{equation}\label{anhp}\d X_t^\veps=\mu_\veps(X_t^\veps)\d t+\d W_t,\quad X_0^\veps=x.
\end{equation}
Assume that
\begin{itemize}
\item[(H3)] there exists a constant $K_2$ such that $|\mu_\veps(y)-\mu_\veps(z)|\leq K_2|y-z|$, \
    $\forall\, y,\, z\in \R$ and $\veps>0$;
\item[(H4)] $\dis \sup_{y\in \R}|\mu(y)-\mu_\veps(y)|\leq \veps$.
\end{itemize}
Further, for any $c>x$, let the first passage time of the process $(X_t^\veps)_{t\geq 0}$ over boundary $c$ be defined as $\tau_c^\veps=\inf\{t>0;\ X_t^\veps\geq c\}$.

\begin{prop}\label{t3.1}
Let $(X_t)_{t\geq 0}$ and $(X_t^\veps)_{t\geq 0}$ be defined as in (\ref{nhp}) and (\ref{anhp}) with $\mu_{l}=\inf_{y\in \R} \mu(y)>-\infty$. Then under (H3) and (H4), for every $T>0$, it
holds
\begin{equation}\label{3.1}
\begin{split}
&\big|\p(\tau_c>T)-\p(\tau_c^\veps>T)\big|\\
&\leq 2Te^{3K_2T/2}e^{G(c)-G(x)}\Big[\int_0^T\!\!\! \frac{c\!-\!x}{\sqrt{2\pi} s^{3/2}}
e^{-\frac{(c-x)^2}{2s}}\Big(|\mu_{l}|\!+\frac{1}{\sqrt{2\pi(T-s)}}\Big)\d s\Big]\veps+o(\veps),
\end{split}
\end{equation}
where $G(y)=\int_{y_0}^y \mu(z)\d z$ for some fixed $y_0$.
\end{prop}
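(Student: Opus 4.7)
The plan is to couple $X$ and $X^\veps$ by running them with the same Brownian motion $W$ and then exploit the resulting pathwise closeness. A standard Gronwall argument on the integral equation $X_t-X_t^\veps=\int_0^t[\mu(X_s)-\mu_\veps(X_s^\veps)]\,\d s$, together with the triangle inequality $|\mu(X_s)-\mu_\veps(X_s^\veps)|\le\veps+K|X_s-X_s^\veps|$ supplied by (H1) and (H2), gives the a.s.\ pathwise estimate $\sup_{0\le t\le T}|X_t-X_t^\veps|\le\delta(\veps):=\veps Te^{KT}$, so $\delta=O(\veps)$.

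Next, I would convert this into a narrow-band estimate. Using $|\p(A)-\p(B)|\le\max(\p(A\setminus B),\p(B\setminus A))$ with $A=\{\tau_c>T\}$ and $B=\{\tau_c^\veps>T\}$, the coupling forces $\sup X_s^\veps\in[c,c+\delta)$ on $A\setminus B$ and $\sup X_s^\veps\in[c-\delta,c)$ on $B\setminus A$. Hence
\[|\p(\tau_c>T)-\p(\tau_c^\veps>T)|\le\max\bigl(\p(\tau_{c-\delta}^\veps\le T<\tau_c^\veps),\ \p(\tau_c^\veps\le T<\tau_{c+\delta}^\veps)\bigr).\]
The strong Markov property at $\tau_{c-\delta}^\veps$ (resp.\ $\tau_c^\veps$) rewrites each of these as $\int_0^T f_b^\veps(s,x)\,\p_b^\veps(\tau_{b+\delta}>T-s)\,\d s$, with $b=c-\delta$ (resp.\ $b=c$).

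The third step is to bound the two factors in the integrand. For the inner passage probability, the comparison theorem against the constant-drift Brownian motion $\mu_-u+W_u$ gives $\p_b^\veps(\tau_{b+\delta}>T-s)\le\p(\sup_{u\le T-s}(\mu_-u+W_u)<\delta)$, and Anderson's formula (used in the proof of Lemma~\ref{t2.2}) with a first-order Taylor expansion at $\delta=0$ yields
\[\p_b^\veps(\tau_{b+\delta}>T-s)\le 2\delta\Bigl(|\mu_-|+\tfrac{1}{\sqrt{2\pi(T-s)}}\Bigr)+O(\delta^2).\]
For the density, (H1) gives $M_\veps\ge -K/3$ in the notation of Lemma~\ref{t2.1}, so Theorem~\ref{t2.3} applied to $X^\veps$ bounds $f_b^\veps(s,x)\le\frac{c-x}{\sqrt{2\pi}s^{3/2}}e^{G(c)-G(x)}e^{KT/2}e^{-(c-x)^2/(2s)}(1+O(\veps))$, where (H2) was used to replace $G_\veps$ by $G$ via $|G_\veps-G|\le\veps|\,\cdot\,|$, and $e^{-3M_\veps s/2}\le e^{KT/2}$ for $s\le T$. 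Substituting and using $\delta=\veps Te^{KT}$ with $e^{KT}\cdot e^{KT/2}=e^{3KT/2}$ assembles the leading coefficient of $\veps$ as $2Te^{3KT/2}e^{G(c)-G(x)}\cdot(\text{integral})$, which is exactly (\ref{3.1}).

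The main obstacle is uniformly controlling the three remainder types, so that they collapse into a single $o(\veps)$: the $O(\delta^2)$ from Anderson's Taylor expansion, the difference $f_{c-\delta}^\veps-f_c^\veps=O(\delta)$, and the $O(\veps)$ error in replacing $G_\veps$ by $G$, all must be integrated against the two singular weights $s^{-3/2}$ at $s=0$ and $(T-s)^{-1/2}$ at $s=T$. The upper bound (\ref{2.7}) of Theorem~\ref{t2.3} provides an integrable envelope for both singularities simultaneously, so a dominated-convergence argument should reduce each remainder to $o(\veps)$.
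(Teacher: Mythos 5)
Your proposal is correct and follows essentially the same route as the paper: Gronwall coupling giving $\sup_{t\le T}|X_t-X_t^\veps|\le\veps Te^{KT}$, reduction to the probability that $\sup_t X_t^\veps$ lands in a band of width $\veps Te^{KT}$ around $c$, a first-passage decomposition of that band probability, the comparison/Anderson bound with drift $\mu_-$ for the inner factor, and the density bound of Theorem~\ref{t2.3} for $X^\veps$ with $M_\veps\ge -K/3$ and $G_\veps$ replaced by $G$ up to $o(\veps)$. The only differences are cosmetic: you phrase the band estimate symmetrically via $A\setminus B$ and $B\setminus A$ where the paper runs the two one-sided inequalities separately with the shifted boundary $\tilde c=c-\veps Te^{KT}$, and you use a Taylor expansion with an $O(\delta^2)$ remainder where the paper uses the exact inequality $1-e^{-x}\le x$.
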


\begin{rem}
The similar result was proved by Downes and Borovkov \cite[Theorem 4.1]{DB} for the case where both diffusion processes have differentiable drift coefficients. Here we generalize their result to the case of non-differentiable drift coefficent.
\end{rem}

Now let $(X_t)_{t\geq 0}$ be the solution of SDE (\ref{nhp}) and assume that there exist positive constants $M_1,\,M_2$ such that
\begin{equation}\label{3.5}
\sup_{y\in \R} |\mu(y)|\leq M_1,\quad \sup_{x,y\in \R} |\mu(x)-\mu(y)|\leq M_2|x-y|.
\end{equation}
For each $n\in \N$, we partition the real line $\R$ with sub-intervals with endpoints $\{0,\pm \frac 1n,\pm\frac 2n,\ldots\}$.
Let
$\mu_n(y)$ be the piecewise linear function taking value $\mu(x_i)$ at $x_i=\frac in$ $(i\in \Z)$. Thus, for each $i\in \Z$,
\begin{equation}\label{3.6}
\begin{split}
\max_{y\in [x_i,x_{i+1}]} |\mu(y)-\mu_n(y)|&\leq \max_{y\in [x_i,x_{i+1}]}\big\{\max\{|\mu(y)-\mu(x_i)|,|\mu(y)-\mu(x_{i+1})|\}\big\}\\
&\leq  M_2/n.
\end{split}
\end{equation}
Further, let $(X_t^{(n)})_{t\geq 0}$ be the solution of the SDE
\begin{equation}\label{3.7}
\d X_t^{(n)}=\mu_n(X_t^{(n)})\d t+\d W_t,\quad X_0^{(n)}=x.
\end{equation}
Then for each $n\in \N$, $(X_t^{(n)})_{t\geq 0}$ is a diffusion process with piecewise linear drift and therefore, for $c>x$, the distribution of its first passage time $\tau_c^{(n)}:=\inf\{t>0;\ X_t^{(n)}\geq c\}$ is given in Theorem \ref{main} in terms of its Laplace transform.
Furthermore, by the definition of $\mu_{n}(y)$ it is easy to see that
\begin{equation}\label{3.9}
\sup_{y\in \R} |\mu_{n}(y)|\leq M_1,\quad |\mu_{n}(y)-\mu_n(z)|\leq M_2|y-z|,\quad y,\,z\in \R,
\end{equation}
which implies $G(c)-G(x)\leq M_1(c-x)$. Therefore by Proposition \ref{t3.1} and (\ref{3.6}) we obtain the following result.

\begin{thm}\label{t3.2}
Let $(X_t)_{t\geq 0}$ and $(X_t^{(n)})_{t\geq 0}$ be the diffusion processes satisfying (\ref{nhp}) and (\ref{3.7}) respectively. Then under the condition (\ref{3.5}), for $c>x$, $T>0$, it holds
\begin{equation}\label{3.8}
\begin{split}
&\big|\p(\tau_c>T)-\p(\tau_c^{(n)}>T)\big|  \leq C(T,x,c)\frac 1n+o\Big(\frac 1n\Big),
\end{split}
\end{equation}
where $\displaystyle  C(T,x,c)=2TM_2e^{3M_2T/2}e^{M_1(c-x)}\Big[\int_0^T\!\!\! \frac{c\!-\!x}{\sqrt{2\pi s^3}} e^{-\frac{(c-x)^2}{2s}}\Big(M_1\!+\!\frac
1{\sqrt{2\pi(T-s)}}\Big)\d s\Big]$.
\end{thm}

\section{An application to reliability theory}

In this section we apply our method to a modelling problem in reliability theory and thereby demonstrate how our methods can be used in practice. Generally speaking, the reliability declines as the underlying system degrades or deteriorates, and the system fails when the level of degradation reaches a certain threshold.
As pointed out by Singpurwalla (1995) \cite{Sing}, using stochastic processes to describe failure models has a more realistic motivation, and this approach better exploits the physics of the failure
process and offers potential for improved assessments of item survivability. Typical stochastic processes are Poisson process, Wiener process and general L\'evy process.  These models may provide a better goodness-of-fit to failure data than commonly used exponential or Weibull distributions. But the challenge of this approach is that calculating the first passage time
distribution is generally quite difficult. So only a small class of diffusion processes, such as the
Brownian motion with state-independent drifts and Ornstein-Uhlenbeck processes, has been used. For example, Wiener process degradation models
have found application in \cite{DH,DN}. Wiener processes with a non-random timescale were used as degradation models for heating cables and
Carbon-film resistors in \cite{PT,WS}.

In \cite{KL}, Kahle and Lehmann considered the model
\begin{equation}\label{Wiener}
Z_t =z_0+\sigma W_{t-t_0}+\mu(t-t_0),\ t\geq t_0,
\end{equation}
where $(W_t)_{t\geq 0}$ is a Wiener process. Moreover, Wang \cite{Wx} considered the above model with random drifts and diffusion coefficients. The
main objects in \cite{KL} and \cite{Wx} are the parameter estimation from observation of degradation or observation of failures. Now we extend the above model to the following form:
\begin{equation}\label{model}
\d X_t= \big[b_1\mathbf{1}_{\{X_t\leq \theta\}}+ (a_2X_t+b_2)\mathbf{1}_{\{X_t\geq \theta\}}\big]\d t +\d
B_t,\quad X_0=0.
\end{equation}
This model reflects the fact that the degradation process $(X_t)_{t\geq 0}$ has different kind of degradation rate according to the stage of $(X_t)_{t\geq 0}$. To simplify our discussion, we assume the threshold $\theta $ of $(X_t)_{t\geq 0}$ is given a priori.
In particular, in what follows we fix the constant $\theta=1$. Hence the question is how to estimate unknown parameters $b_1,\, a_2,\, b_2$ given the observations of the first passage time (failure time).

First, we can use the observations of the first passage time through a boundary $0<c<1$ and the method of \cite{KL} to estimate the parameter $b_1$. The main challenge is to estimate parameters $a_2,\, b_2$. Here we propose to
use the method of moments to estimate them, since the moments of the first passage time are easily calculated using (\ref{2.12}). Specifically, we first choose and fix the threshold $c>1$ for model (\ref{model}).
Note that the continuity of $\mu(\cdot)$ implies
\begin{equation}\label{coeff-cond}
b_1=a_2+b_2.
\end{equation}
According to Theorem \ref{main},
\[\hat{f}_c(0|\lambda)=\frac{u(0)}{u(c)},\]
and $u$ is given by (\ref{2.13}). In order to determine the constants $C_{1,i},\,C_{2,i}$ in $J_i$ and $\tilde J_i$, note that we have
\[
u(x)=J(-\frac{\lambda}{2 a_2}, \frac 12, -a_2(x+\frac{b_2}{a_2})^2),\ \text{ for $x\geq 1$}.
\]
Further, using the conditions
$\lim_{x\uparrow 1} u(x)=\lim_{x\downarrow 1} u(x)$ and $u'_+(1)=u'_{-}(1)$, we obtain
\[
u(x)=C_1(\lambda) e^{-b_1x+\frac{\Delta_1 x}{2}}+C_2(\lambda)e^{-b_1x-\frac{\Delta_1 x}{2}},\quad  \text{for}\quad x\leq 1,
\]
where $\Delta_1=\sqrt{4b_1^2+8\lambda}$ and
\begin{align*}
C_1(\lambda)&= e^{b_1+\frac{\Delta_1 }{2}}\Big[\big(\frac 12+\frac{b_1}{\Delta_1}\big)\Psi\big(-\frac{\lambda}{2 a_2}, \frac 12;
-b_1\big)+\frac 1{\Delta_1}\Psi_x'\big(-\frac{\lambda}{2 a_2}, \frac 12; -b_1\big)\Big],\\
C_2(\lambda)&= \frac{1}{\Delta_1}e^{b_1+\frac{\Delta_1}{2}}\Big[(\frac{\Delta_1}{2}-b_1)\Psi\big(-\frac{\lambda}{2 a_2}, \frac 12;
-b_1\big)-\Psi_x'\big(-\frac{\lambda}{2 a_2}, \frac 12; -b_1\big)\Big].
\end{align*}
Thus, we obtain the explicit expression
\[\hat f(0|\lambda)=\frac{C_1(\lambda)+C_2(\lambda)}{C_1(\lambda)e^{-b_1c+\frac{c\Delta_1}{2}}+C_2(\lambda)e^{-b_1c-\frac{c\Delta_1}{2}}}.\]
Moreover, some elementary calculations yield
\begin{gather*}
\Psi(a, \frac 12; x)=\sum_{k=1}^\infty \frac{(a)_k}{(\frac12)_k} \frac{x^k}{k!},\quad
\Psi_a'(0,\frac 12; x)=\sum_{k=1}^\infty \frac{x^k}{k(\frac 12)_k},\\ \Psi_x'(0,\frac 12; x)=0, \quad
\Psi_{xa}''(0,\frac 12; x):=\frac{\partial^2 \Psi}{\partial x\partial a}\big(a,\frac12;x\big)=\sum_{k=1}^\infty \frac{x^{k-1}}{\big(\frac12)_k},\\
C_1(0)=e^{b_1+|b_1|}\big(\frac 12+\frac{b_1}{2|b_1|}\big),\quad C_2(0)=\frac{1}{2|b_1|}e^{b_1+|b_1|}\big(|b_1|-b_1\big),
\end{gather*}
and
\begin{align*}
C_1'(0)&=\big(\frac 1{2|b_1|}+\frac 1{2b_1}-\frac{1}{2b_1|b_1|}\big) e^{b_1+|b_1|}-\frac{1}{2a_2}e^{b_1+|b_1|}\big(\frac 12+\frac{b_1}{2|b_1|}\big)\Psi_a'(0,\frac 12;-b_1)\\ &\quad-\frac{1}{2a_2}\frac{1}{2|b_1|}e^{b_1+|b_1|}\Psi_{xa}''(0,\frac 12;-b_1),\\
C_2'(0)&=e^{b_1+|b_1|}\big(\frac 12+\frac 1{2|b_1|}-\frac 1{2b_1}+\frac{1}{2b_1|b_1|}\big)-\frac 1{2a_2}e^{b_1+|b_1|}\big(\frac 12-\frac{b_1}{2|b_1|}\big)\Psi_a'(0,\frac 12;-b_1)\\&\quad-\frac{1}{2a_2}e^{b_1+|b_1|}\Psi_{xa}''(0,\frac 12;-b_1).
\end{align*}
Using these quantities we obtain the expectation
\begin{align*}
-\E_0[\tau_c]&=\frac{d}{d \lambda}\hat f(0|\lambda)\big|_{\lambda=0}
=\frac{C_1'(0)+C_2'(0)}{C_1(0)e^{-b_1c+c|b_1|}+C_2(0)e^{-b_1c-c|b_1|}}\\
&-\frac{\big(C_1(0)+C_2(0)\big)\Big(\big(C_1'(0)+\frac{c}{|b_1|}C_1(0)\big)e^{-b_1c+c|b_1|}+
\big(C_2'(0)-\frac{c}{|b_1|}C_2(0)\big)e^{-b_1c-c|b_1|}\Big)}{\Big(C_1(0)e^{-b_1c+c|b_1|}+C_2(0)e^{-b_1c-c|b_1|}\Big)^2}.
\end{align*}
That is, as a function of the parameter $a_2$, $\E[\tau_c]$ is in the form
\[\E[\tau_c]=\frac{\xi}{a_2}+\eta, \]
where $\xi$ and $\eta$ are constants depending only on $b_1$ and $c$. Therefore $a_2$, and hence $b_2$, can be estimated by the method of moment using equation (\ref{coeff-cond}).

\section{Appendix}

We use the idea of \cite{DB} to prove Theorem \ref{t2.3}. First we establish a lemma used below. Let $p(s,x,y)$ and $q(s,x,y)$ denote the transition densities of $(X_s)_{s\geq 0}$ and Brownian motion respectively. Let $\p_x$ denote the law of $(X_s)_{s\geq 0}$ starting at $X_0=x$ in $C_x([0,\infty))$, and  $\p_x^z$ the law of $(X_s)_{0\leq s\leq t}$ starting at $X_0=x$ and conditioning on $X_t=z$ in $C_x([0,\infty))$. Similarly,  define $\q_x^z$ be the corresponding probability measures for the Brownian motion $(W_s)_{s\geq 0}$.
This lemma is a generalization of \cite[Theorem 3.1]{DB} to the case where $\mu(\cdot)$ is not  necessarily everywhere differentiable.

\begin{lem}\label{t2.1}
For $t>0$ and  every $A\in \mathcal F_t$,
\begin{equation}\label{2.1}
\p_x^z(A)\leq \frac{q(t,x,z)}{p(t,x,z)} e^{G(z)-G(x)-3Mt/2}\q_x^z(A),
\end{equation}
where $G(y)=\int_{y_0}^y\mu(z)\d z$ for some fixed $y_0$, and $M=\inf_{y\in\R}\big\{\mu(y)^2+\frac13 \mu_{-}'(y)\big\}$ with
$\mu_{-}'(y)=\liminf_{z\ra y}\frac{\mu(z)-\mu(y)}{z-y}$.
\end{lem}

\begin{proof}
Since $X_t=W_t+\int_0^t \mu(X_s)\d s$, by Girsanov's theorem,
\begin{equation}\label{2.2}
\p_x(A)=\q_x(\zeta_t \mathbf{1}_{A}),\ A\in \mathcal F_t,
\end{equation} where $Q_x(\zeta_t \mathbf{1}_A)=\int \zeta_t\mathbf{1}_A\,\d Q_x$ and
$$\zeta_t=\exp\Big[\int_0^t\mu(X_s)\d W_s-\frac 12\int_0^t\mu(X_s)^2\d s\Big].$$
Conditioning on the value of corresponding processes at time $t$, due to (\ref{2.2}), we  obtain that for any Borel measurable set $B\subset\R$,
\begin{align*}
&\int_{-\infty}^{\infty} \mathbf 1_B(z)\p_x^z(A)p(t,x,z)\,\d z=\int_{-\infty}^{\infty}\mathbf 1_B(z)\p_x^z(A)\p_x(X_t\in \d z)\\
&=\int_{-\infty}^\infty \mathbf 1_B(z)\q_x^z(\zeta_t\mathbf 1_A)\q_x(W_t\in\d z)=\int_{-\infty}^\infty \mathbf 1_B(z)\q_x^z(\zeta_t \mathbf 1_A)q(t,x,z)\,\d z.
\end{align*}
This implies that
\begin{equation}\label{2.3}
\p_x^z(A)=\frac{q(t,x,z)}{p(t,x,z)}\q_x^z(\zeta_t \mathbf 1_A).
\end{equation}
Further, let $\rho(y)=1/\sqrt{2\pi}e^{-y^2/2}$ and $\mu_n(y)=\int_{-\infty}^\infty \frac{n\mu(z)}{\sqrt{2\pi}}e^{-\frac{n^2(z-y)^2}{2}}\d z$
for $n\in \N$.
Then $\mu_n(y)$ converges to $\mu(y)$ for every $y\in \R$ as $n\ra \infty$. Moreover, by the dominated convergence theorem,
$$\E\Big(\int_0^t(\mu_n(X_s)-\mu(X_s))\d W_s\Big)^2=\E\int_0^t(\mu_n(X_s)-\mu(X_s))^2\d s\longrightarrow 0,\quad \text{as}\ n\ra \infty.$$
It follows that, up to an extraction of subsequence, $\dis\int_0^t\mu_n(X_s)\d W_s\ra \int_0^t \mu(X_s)\d W_s$ almost surely.
Similarly by the dominated convergence theorem, $G_n(X_t)$ converges almost surely to $G(X_t)$ as $n\ra \infty$, where $G_n(y)=\int_{y_0}^y
\mu_n(z)\d z$.
Further, by Fatou's lemma, we have
\begin{align*}
\mu_n'(y)&=\lim_{h\ra 0}\frac{\mu_n(y+h)-\mu_n(y)}{h}\\
&=\lim_{h\ra 0}\frac 1h\int_{-\infty}^\infty \big(\mu(y+h- \frac zn)-\mu(y-\frac zn)\big)\frac 1{\sqrt{2\pi}}e^{-z^2/2}\d z\\
&\geq \int_{-\infty}^\infty \liminf_{h\ra 0}\frac{\mu(y-\frac zn+h)-\mu(y-\frac zn)}{h}\frac 1{\sqrt{2\pi}} e^{-z^2/2}\, \d z\\
&=\int_{-\infty}^\infty \mu_{-}'(y\!-\!\frac zn)\frac1{\sqrt{2\pi}}e^{-z^2/2}\,\d z.
\end{align*} Noticing that  $\mu_{-}'(y)=\liminf_{z\ra y}\frac{\mu(z)-\mu(y)}{z-y}$, by Fatou's lemma we get
$$\liminf_{n\ra \infty}\mu_n'(y)\geq\liminf_{n\ra \infty}\int_{-\infty}^\infty \mu_{-}'(y\!-\!\frac zn) \frac 1{\sqrt{2\pi}}e^{-z^2/2}\,\d
z\geq
 \mu_{-}'(y),\quad \forall\, y\in \R.$$  Hence, $\lim_{n\ra\infty}\int_0^t\mu_n'(X_s)\d s \geq \int_0^t\mu_{-}'(X_s)\d s$ almost surely.
Applying It\^o's formula to $G_n(X_t)$, we have
\begin{equation*}
\int_0^t\mu_n(X_s)\d W_s=G_n(X_t)-G_n(X_0)-\int_0^t \mu_n(X_s)\mu(X_s)\d s-\frac 12\int_0^t\mu_n'(X_s)\d s.
\end{equation*}
Passing to the limit as $n\ra \infty$ yields
\begin{equation}\label{2.4}
\int_0^t \mu(X_s)\d W_s\leq G(X_t)-G(X_0)-\int_0^t\mu(X_s)^2\d s-\frac 12 \int_0^t\mu_{-}'(X_s)\d s, \ a.s.
\end{equation}
Finally, combining (\ref{2.4}) with (\ref{2.3}) we have
$$\p_x^z(A)\leq \frac{q(t,x,z)}{p(t,x,z)} e^{G(z)-G(x)}\q_x^z\big(e^{-\frac 32\int_0^t\mu(X_s)^2\d s-\frac 12\int_0^t\mu_{-}'(X_s)\d s}\mathbf
1_A\big),$$
which yields the desired result (\ref{2.1}).
\end{proof}

\noindent\textbf{Proof of Theorem \ref{t2.3}}:\
Let $\nu_{\tau }$ denote the distribution of $\tau_c$. According to Besicovitch derivation theorem (cf. \cite[Theorem 2.22]{Am}), $\nu_{\tau }$ admits the following Radon-Nikodym decomposition with respect to the Lebesgue measure $m(\d t)$ on $(0,\infty)$,
\[\nu_{\tau }(\d t)=\nu_\tau\big|_F(\d t)+\nu_{\tau}\big|_E(\d t),\]
where
\[E=\big\{t\in (0,\infty);\ \lim_{h\downarrow 0}\frac{\nu_{\tau}((t-h,t+h))}{2h}=\infty\big\},\quad \text{and}\ \ F=(0,\infty)\backslash E.\]
Moreover, for a.e. $t\in (0,\infty)$, the limit $f_c(t,x)=\lim_{h\downarrow 0}\frac{\nu_{\tau}((t-h,t+h))}{2h}$ exists and $\nu_{\tau}\big|_{F}(\d t)=f_c(t,x)\d t$. Based on this decomposition, to show the existence of the density $f_c(t,x)$, we only need to show that
\[\limsup_{h\downarrow 0} \frac{\p(\tau_c\in (t-h,t+h))}{2h}<\infty \quad\text{for every $t>0$},\]
which yields that $E$ is an empty set, and $\nu_\tau$ is absolutely continuous  with respect to the Lebesgue measure.
In the following we shall estimate $\limsup_{h\downarrow 0} \frac{\p(\tau_c\in (t,t+h))}{h}$. The term  $\limsup_{h\downarrow 0} \frac{\p(\tau_c\in (t-h,t])}{h}$ can be estimated similarly.

We have
\begin{align*}
\p_x(\tau_c\in (t,t+h))&=\int_{-\infty}^c\p_x(\tau_c\in (t,t+h)|X_t=z)\p_x(X_t\in \d z)\\
&=\int_{-\infty}^{c-h^{1/4}}+\int_{c-h^{1/4}}^c \p_x(\tau_c\in (t,t+h)|X_t=z)\p_x(X_t\in \d z)\\
&=:\mathrm{I}_1+\mathrm{I}_2
\end{align*}
We shall estimate $\mathrm I_1$ and $\mathrm I_2$ separately. As a preparation, we estimate the probability $\p_x(\sup_{t\leq s\leq t+h} (X_s-c)\geq 0|X_t=z)$ separatively according to the distance between $z$ and $c$.

\noindent\textbf{Case 1}.\ When $z\in (-\infty, c-h^{\frac 14}]$, set $\gamma=c-h^{\frac 14}$.
If the process $(X_t)_{t\geq 0}$ goes up to cross the level $c$, it must first hit the level $\gamma$ some time during $(t,t+h)$. Thus
\begin{align*}
  &\p_x\big(\sup_{t\leq s\leq t+h}(X_s-c)\geq 0|X_t=z\big)\\
  &\leq \sup_{t\leq t'\leq t+h}\p_x\big(\sup_{t'\leq s\leq t+h} (X_s-c)\geq 0|X_{t'}=\gamma\big)\\
  &\leq \p_x\big(\sup_{t\leq s\leq t+h}(X_s-c)\geq 0|X_t=\gamma\big)=\p_\gamma\big(\sup_{0\leq s\leq h}(X_s-c)\geq 0\big).
\end{align*}
Then for $\alpha=c-h^{\frac 18}<\gamma$ we have
\begin{align*}
  &\p_{\gamma}\big(\sup_{0\leq s\leq h}(X_s-c)\geq 0\big)\\
  &=\p_\gamma\big(\sup_{0\leq s\leq h} (X_s-c)\geq 0, \inf_{0\leq s\leq h} X_s>\alpha\big)+\p_\gamma\big(\sup_{0\leq s\leq h} (X_s-c)\geq 0, \inf_{0\leq s\leq h} X_s\leq \alpha\big)
\end{align*}
Let $(Y_s)_{s\geq 0}$ be a process satisfying the SDE
\[\d Y_s=\mu_{+}\d s+\d W_s,\quad Y_0=X_0=\gamma,\ \text{where}\ \mu_{+}=\sup_{\alpha\leq y\leq c}\mu(y)<\infty.\]
Then $Y_s\geq X_s$ a.s. up to the time when $(X_s)_{s\geq 0}$ first leaves the interval $(\alpha,c)$. Consequently,
\begin{equation}
\label{ine-1}
\begin{split}
  &\p_{\gamma}\big(\sup_{0\leq s\leq h}(X_s-c)\geq 0,\ \inf_{0\leq s\leq h} X_s>\alpha\big)\\
  &\leq \p_{\gamma}\big(\sup_{0\leq s\leq h}(Y_s-c)\geq 0,\ \inf_{0\leq s\leq h} Y_s>\alpha\big)\leq \p_{\gamma}\big(\sup_{0\leq s\leq h} (Y_s-c)\geq 0\big)\\
  &=Q_0\big(\sup_{0\leq s\leq h}(W_s+\mu_{+}s+\gamma-c)\geq 0\big)\\
  &=1-\Phi\Big(\frac{-\mu_{+}h+c-\gamma}{\sqrt{h}}\Big)
  +e^{2\mu_{+}(c-\gamma)}\Phi\Big(\frac{-\mu_{+}h-c+\gamma}{\sqrt{h}}\Big).
\end{split}
\end{equation}
Let $(Z_s)_{s\geq 0}$ be a process satisfying the SDE
\[\d Z_s=\mu_{-}\d s+\d W_s,\ Z_0=X_0=\gamma,\quad \text{where}\ \mu_{-}=\inf_{\alpha\leq y\leq c}\mu(y)>-\infty.\]
Then $Z_s\leq X_s$ a.s. up to the time when $(X_s)_{s\geq 0}$ first leaves the interval $(\alpha,c)$. Consequently,
\begin{equation}\label{ine-2}
\begin{split}
  &\p_{\gamma}\big(\sup_{0\leq s\leq h} (X_s-c)\geq 0,\ \inf_{0\leq s\leq h} X_s\leq \alpha\big)\\
  &\leq \p_{\gamma}\big(\sup_{0\leq s\leq h}(X_s-c)\geq 0,\ \inf_{0\leq s\leq h}Z_s\leq \alpha\big)\\
  &\leq \p_{\gamma}\big(\inf_{0\leq s\leq h}Z_s\leq \alpha\big)=Q_0\big(\inf_{0\leq s\leq h}(W_s+\mu_{-}s+\gamma-\alpha)\leq 0\big)\\
  &=1-\Phi\Big(\frac{\mu_{-}h+\gamma-\alpha}{\sqrt{h}}\Big)
  +e^{2\mu_{-}(\alpha-\gamma)}\Phi\Big(\frac{\mu_{-}h+\alpha-\gamma}{\sqrt{h}}\Big).
\end{split}
\end{equation}

\noindent\textbf{Case 2}. When $z\in (c-h^{\frac1 4}, c)$, using the same argument as that of (\ref{ine-1}) and (\ref{ine-2}) but taking $\gamma=z$, we can get
\begin{equation}\label{ine-3}
\begin{split}
  &\p_x\big(\sup_{t\leq s\leq t+h}(X_s-c)\geq 0|X_t=z\big)\\
  &\leq 1-\Phi\Big(\frac{-\mu_{+}h+c-z}{\sqrt{h}}\Big)
  +e^{2\mu_{+}(c-z)}\Phi\Big(\frac{-\mu_{+}h-c+z}{\sqrt{h}}\Big)\\
  &\quad +1-\Phi(\frac{\mu_{-}h+z-\alpha}{\sqrt{h}}\Big)
  +e^{2\mu_{-}( \alpha-z)}\Phi\Big(\frac{\mu_{-}h+\alpha-z}{\sqrt{h}}\Big)
\end{split}
\end{equation}

Now we return to estimate $\mathrm{I}_1$ and $\mathrm{I}_2$. Due to (\ref{ine-1}) and (\ref{ine-2}) we have
\begin{align*}
\mathrm{I}_1&=\int_{\!-\infty}^{c-h^{1/4}}\!\!\!\!\p_x\big(\sup_{0\leq s\leq t}\!(X_s\!-c)\!<0|X_t\!=\!z\big)\p_x\big(\!\!\sup_{t\leq s\leq
t+h}\!\!(X_s\!-c)\!\geq 0|X_t\!=\!z\big)\p_x(X_t\!\in\! \d z)\\
&\leq \int_{\!-\infty}^{c-h^{1/4}}\!\!\!\!\p_x\big(\sup_{t\leq s\leq t+h}\!\!(X_s-c)\geq 0|X_t=z\big)\p_x(X_t\in \d z)\\
&\leq
1-\Phi\Big(\frac{-\mu_+h+h^{\frac 14}}{\sqrt{h}}\Big)
+e^{2\mu_+h^{\frac 14}}\Phi\Big(\frac{-\mu_+h-h^{\frac 14}}{\sqrt{h}}\Big)\\ &\quad +1-\Phi\Big(\frac{\mu_{-}h+h^{\frac 18}-h^{\frac 14}}{\sqrt{h}}\Big)+e^{2\mu_{-}(h^{\frac1 4}-h^{\frac 18})}\Phi\Big(\frac{\mu_{-}h+h^{\frac 14}-h^{\frac 18}}{\sqrt{h}}\Big).
\end{align*}
As
\begin{gather*}
  1-\Phi\Big(\frac{-\mu_+h+h^{\frac 14}}{\sqrt{h}}\Big)=o(h),\quad \Phi\Big(\frac{-\mu_+h-h^{\frac 14}}{\sqrt{h}}\Big)=o(h),\\
  1-\Phi\Big(\frac{\mu_{-}h+h^{\frac 18}-h^{\frac 14}}{\sqrt{h}}\Big)=o(h),\ \Phi\Big(\frac{\mu_{-}h+h^{\frac 14}-h^{\frac 18}}{\sqrt{h}}\Big)=o(h),
\end{gather*}
we obtain
\begin{equation}\label{I-1}
\limsup_{h\downarrow 0} I_1/h\leq 0.
\end{equation}

By Lemma \ref{t2.1} and (\ref{ine-3}), we get
\begin{align*}
\mathrm{I}_2&=\int_{c-h^{1/4}}^c\!\!\!\!\p_x\big(\sup_{0\leq s\leq t}\!(X_s\!-c)\!<0|X_t\!=\!z\big)\p_x\big(\!\!\sup_{t\leq s\leq
t+h}\!\!(X_s\!-c)\!\geq 0|X_t\!=\!z\big)\p_x(X_t\!\in\! \d z)\\
&\leq \int_{c-h^{1/4}}^c q(t,x,z) e^{G(z)-G(x)-\frac{3Mt}{2}}\big(1-e^{-2(c-x)(c-z)/t}\big)\\
&\qquad\qquad \cdot\Big[1-\Phi\big(\frac{-\mu_+h+c-z}{\sqrt{h}}\big)
+e^{2\mu_+(c-z)}\Phi\big(\frac{-\mu_+h-c+z}{\sqrt{h}}\big)\\
&\quad +1-\Phi(\frac{\mu_{-}h+z-\alpha}{\sqrt{h}}\Big)
  +e^{2\mu_{-}( \alpha-z)}\Phi\Big(\frac{\mu_{-}h+\alpha-z}{\sqrt{h}}\Big)\Big]\d z\\
&=\frac{2(c-x)}{t}\int_0^{h^{1/4}}\!\!\!e^{G(c-y)-G(x)-\frac{3Mt}{2}} (y+O(y^2))\Big[1-\Phi(-\mu_+h^{1/2}+y h^{-1/2}\big)\\
&\ \hspace{ 2 cm}+e^{2\mu_+y}\Phi\big(-\mu_+h^{1/2}-yh^{-1/2}\big)\Big]q(t,x,c-y)\d y+o(h).
\end{align*}
It follows that
\begin{equation}\label{2.8}
\begin{split}
&\limsup_{h\downarrow 0}\,\mathrm I_2/h
\leq K\lim_{h\downarrow 0}\frac 1h\int_0^{h^{1/4}}\!\!\!\big(y+O(y^2)\big)\Big[1-\Phi\big(-\mu_+h^{1/2}+y h^{-1/2}\big)\\
&\hspace{ 2.5cm}+e^{2\mu_+y}\Phi\big(-\mu_+h^{1/2}-yh^{-1/2}\big)\Big]q(t,x,c-y)\d y,
\end{split}
\end{equation}
where $K=\frac{2(c-x)}{t}e^{G(c)-G(x)-\frac{3Mt}{2}}$.
Note that
 \begin{align*}
& \lim_{h\downarrow 0}\frac 1h\int_0^{h^{1/4}}\!\!\! y\Big[1-\Phi\big(-\mu_+h^{1/2}+y h^{-1/2}\big)\\
& \hspace{2.5 cm}+e^{2\mu_+y}\Phi\big(-\mu_+ h^{1/2}-yh^{-1/2}\big)\Big]q(t,x,c-y)\d y\\
&=\frac{1}{2\sqrt{2\pi t}} e^{-\frac{(c-x)^2}{2t}}.
\end{align*}
Substituting this into (\ref{2.8}), we have
$$\limsup_{h\downarrow 0} I_2 / h\leq \frac{c-x}{\sqrt{2\pi} t^{3/2}}e^{G(c)-G(x)-\frac{3Mt}{2}} e^{-\frac{(c-x)^2}{2t}}.$$
Combining with the estimate of \eqref{I-1}, we finally obtain
\begin{equation}\limsup_{h\downarrow 0} \frac 1h \p_x(\tau_c\in (t,t+h))\leq \frac{c-x}{\sqrt{2\pi} t^{3/2}}e^{G(c)-G(x)-\frac{3Mt}{2}}
e^{-\frac{(c-x)^2}{2t}},
\end{equation}
which yields that $E$ is an empty set, the density $f_c(t,x)$ exists and the upper bound (\ref{2.7}) for $f_c(t,x)$ holds. We conclude the proof of Theorem \ref{t2.3}. \fin

\noindent\textbf{Proof of Theorem \ref{main}}:\
First, by \cite[2.1.2.11]{PZ} the second-order differential equations
\begin{equation*}
 \frac{\d^2 y}{\d x^2}+a\frac{\d y}{\d x}+by=0
\end{equation*}
has explicit solutions given by
\begin{equation*}
y=\begin{cases} \exp{(-\frac 12 ax)}\big[C_1\exp{(\frac 12 \Delta x)}+C_2\exp{(-\frac 12 \Delta x)}\big]\quad &\text{if}\ \Delta^2=a^2-4b>0,\\
                \exp{(-\frac 12 ax)}\big[C_1\sin(\frac 12 \Delta x)+C_2\cos(\frac 12 \Delta x)\big]\quad &\text{if}\ \Delta^2=4b-a^2>0,\\
                \exp{(-\frac 12 ax)}\big(C_1x+C_2\big)\quad &\text{if}\ a^2=4b,
\end{cases}
\end{equation*} where $C_1,\,C_2$ are constants. Similarly, by \cite[2.1.2.108]{PZ} the general solution of ODE
\begin{equation*}
\frac12 \frac{\d^2 y}{\d x^2}+(a_1x+b_1)\frac{\d y}{\d x}+b_0 y=0, \quad a_1\neq 0
\end{equation*}
is $\dis J\big(b_0,\frac 12; -a_1\big(x+\frac{b_1}{a_1}\big)^2\big)$, where
\begin{align*}
J(a,b;x)&=C_1\Psi(a,b;x)+C_2x^{1-b}\Psi(a-b+1,2-b;x),\\
\Psi(a,b;x)&=1+\sum_{k=1}^\infty\frac{(a)_k}{(b)_k}\frac{x^k}{k\,!}.
\end{align*}
Combining these two solutions together, we obtain the explicit solution (\ref{2.13}) to the differential equation (\ref{2.11}).

\noindent\textbf{Proof of Proposition \ref{t3.1}}:\
First we have
\begin{align*}
\d (X_t-X_t^\veps)&=(\mu(X_t)-\mu_\veps(X_t^\veps))\d t
                  =(\mu(X_t)-\mu_\veps(X_t)+\mu_\veps(X_t)-\mu_{\veps}(X_t^\veps))\d t.
\end{align*}
By (H3) and (H4),
$$|X_t-X_t^\veps|\leq \veps t+\int_0^tK_2|X_s-X_s^\veps|\d s$$
and it follows from Gronwall's lemma that
\[ |X_t-X_t^\veps|\leq \veps t e^{K_2t}.\]
Then
\begin{align}\label{3.2}
\notag
&\p\big(\sup_{0\leq t\leq T}(X_t-c)<0\big)-\p\big(\sup_{0\leq t\leq T}(X_t^\veps-c)<0\big)\\ \notag
&\leq \p\big(\sup_{0\leq t\leq T}(X_t^\veps-\veps te^{K_2t}-c)<0\big)-\p\big(\sup_{0\leq t\leq T}(X_t^\veps-c)<0\big)\\ \notag
&\leq \p\big(0\leq \sup_{0\leq t\leq T}(X_t^\veps-c)<\veps Te^{K_2T}\big)=\p\big(\tau_c^\veps\leq T, \sup_{\tau_c^\veps\leq t\leq T}(X_t^\veps-c)<\veps T e^{K_2 T}\big)\\
&=\int_0^T\p(\tau_c^\veps\in \d s)\p\big(\sup_{s\leq t\leq T}(X_t^\veps-c)<\veps T e^{K_2T}\big|X_s^\veps =c\big).
\end{align}
Set $\mu_{\veps}^-=\inf\{\mu_\veps(y)\}$. By (H4), it holds that $\mu_{\veps}^{-}\geq \mu_{l}-\veps>-\infty$.
Since
$$X_t^\veps-X_s^\veps\geq \mu_{\veps}^{-}(t-s)+W_t-W_s,\quad t>s,$$
we get
\begin{equation}\label{3.3}
\begin{split}
&\p\big(\sup_{s\leq t\leq T}(X_t^\veps-c)<\veps Te^{K_2T}\big|X_s^\veps=c\big)\\
&\leq \p\big(\sup_{0\leq t\leq T-s}(\mu_{\veps}^{-}\,t +W_t)<\veps T e^{K_2T}\big)\\
&=\Phi\big(\frac{-\mu_{\veps}^{-}(T-s)+\veps Te^{K_2T}}{\sqrt{T-s}}\big)-e^{2\mu_{\veps}^{-}\veps Te^{K_2T}}\Phi\big(\frac{-\mu_{\veps}^{-}(T-s)-\veps T
e^{K_2T}}{\sqrt{T-s}}\big)\\
&=:I(T,s,K_2,\mu_{\veps}^{-};\veps).
\end{split}
\end{equation}
Substituting the above inequality into (\ref{3.2}) and using Theorem \ref{t2.3}, we obtain
\begin{equation}\label{3.4}
\begin{split}
&\p\big(\sup_{0\leq t\leq T}(X_t-c)<0\big)-\p\big(\sup_{0\leq t\leq T}(X_t^\veps-c)<0\big)\\
&\leq \int_0^T\frac{c-x}{\sqrt{2\pi} s^{3/2}}e^{G_\veps(c)-G_{\veps}(x)-\frac{3M_\veps s}{2}}e^{-\frac{(c-x)^2}{2s}}I(T,s,K_2,\mu_{\veps}^{-};\veps)\d s,
\end{split}
\end{equation}
where $G_\veps(y)=\int_{y_0}^y\mu_\veps(z)\d z$, $M_\veps=\inf\{\mu_\veps^2+\frac 13\mu_{\veps,-}'(y)\}$, and
$$\mu_{\veps,-}'(y):=\liminf_{z\ra y}\frac{\mu_\veps(z)-\mu_{\veps}(y)}{z-y}\leq K_2.$$
Further, by  $1-e^{-x}\leq |x|$ for all $x\in \R$, we have the following upper bound
\begin{equation}\label{est I}
I(T,s, K_2,\mu_{\veps}^{-};\veps)
\leq \Big(\frac{2Te^{K_2T}}{\sqrt{2\pi(T-s)}} +2(|\mu_{l}|+\veps)Te^{K_2T}\Big)\veps.
\end{equation}
Inserting \eqref{est I} into \eqref{3.4} and noting $G_\veps(x) \sim G(x)$ as $\veps \ra 0$, we get
\begin{equation*}
\begin{split}
& \p(\sup_{0\leq t\leq T} (X_t -c)<0)-\p(\sup_{0\leq t\leq T} (X_t -c)<0) \\
&\leq 2Te^{3K_2T/2}e^{G(c)-G(x)}\Big[\int_0^T\!\!\! \frac{c\!-\!x}{\sqrt{2\pi} s^{3/2}}
e^{-\frac{(c-x)^2}{2s}}\Big(|\mu_{l}|\!+\!1/\big(2\pi(T-s)\big)^{\frac 12}\Big)\d s\Big]\veps+o(\veps),
\end{split}
\end{equation*}
In order to prove the inverse direction of the inequality, we write
\begin{align*}
&\p\big(\sup_{0\leq t\leq T} (X_t^\veps-c)<0\big)-\p\big(\sup_{0\leq t\leq T}(X_t-c)<0\big)\\
&\leq \p(\sup_{0\leq t\leq T}(X_t^\veps-c)<0\big)-\p\big(\sup_{0\leq t\leq T} (X_t^\veps+\veps Te^{K_2T}-c)<0\big)\\
&\leq \p(\sup_{0\leq t\leq T}(X_t^\veps+\veps Te^{K_2T}-c)<\veps Te^{K_2T}\big)-\p\big(\sup_{0\leq t\leq T}(X_t^\veps +Te^{K_2T}-c)<0\big)\\
&=\int_0^T\p(\tau_{\tilde c}^\veps\in \d s)\p\big(\sup_{s\leq t\leq T}(X_t^\veps-\tilde c)<\veps T e^{K_2T}\big|X_s^\veps=\tilde c\big),
\end{align*}
where $\veps$ is small enough so that $c-\veps Te^{K_2T}>x$ as $c>x$, and $\tilde c=c-\veps Te^{K_2T}$.
Using the same arguments for (\ref{3.3}) and (\ref{3.4}), we can obtain the lower bound. Combining this with (\ref{est I}) and noting that
$M_\veps\geq -K_2/3$, we have
\begin{equation*}
\begin{split}
&\big|\p(\sup_{0\leq t\leq T} (X_t -c)<0)-\p(\sup_{0\leq t\leq T} (X_t -c)<0)\big|\\
&\leq 2Te^{3K_2T/2}e^{G(c)-G(x)}\Big[\int_0^T\!\!\! \frac{c\!-\!x}{\sqrt{2\pi} s^{3/2}}
e^{-\frac{(c-x)^2}{2s}}\Big(|\mu_{l}|\!+\!1/\big(2\pi(T-s)\big)^{\frac 12}\Big)\d s\Big]\veps+o(\veps),
\end{split}
\end{equation*}
which concludes the proof.

\bigskip

\noindent \textbf{Acknowledgement:} The authors are grateful to Professor James C. Fu for valuable discussions during the first author's visit to the
Department of Statistics, University of Manitoba, where the main part of this work is done. The research is partially supported by the Natural Sciences and Engineering Research Council of Canada (NSERC) and NSFC (No.11301030), 985-project.

\end{document}